\newtheorem{theorem}{Theorem}[section]
\newtheorem{corollary}[theorem]{Corollary}
\newtheorem{lemma}[theorem]{Lemma}
\newtheorem{proposition}[theorem]{Proposition}
\theoremstyle{definition}
\newtheorem{remark}[theorem]{Remark}
\newtheorem{example}[theorem]{Example}
\numberwithin{equation}{section}
\begin{document}
	
	
	\baselineskip=17pt
	
	
	\title[On a cyclic inequality and Shapiro-type analogues]{On a cyclic inequality with exponents and permutations, and its Shapiro-type analogues}
	
	\author[A. Czarnecki]{Andrzej Czarnecki}
	\address{Jagiellonian University, Faculty of Mathematics and Computer Science\\
		\L{}ojasiewicza 6\\
		30-848 Krakow, Poland}
	\email{andrzejczarnecki01@gmail.com}
	
	\author[G. Kiciński]{Gabriel Kici\'nski}
	\address{University of Warsaw, Faculty of Mathematics, Informatics and Mechanics\\
		Banacha 2\\
		02-097 Warszawa, Poland}
	\email{gabriel.kicinski@students.mimuw.edu.pl}
	
	\date{\today}

	\begin{abstract}
		We prove that the cyclic inequality $\sum\limits_{i=1}^{n}\left(\frac{x_i}{x_{i+1}}\right)^k\geq\sum\limits_{i=1}^{n}\frac{x_i}{x_{\sigma(i)}}$ holds for all positive $x_i$'s if and only if $k$ is in a specific range dependent on the permutation $\sigma$, related to band permutations. We also show that the same is not true for the Shapiro-type generalizations, proving in the process some analogous inequalities with exponents.
	\end{abstract}
	
	\subjclass[2020]{26D15,26D20}
	
	\keywords{cyclic inequality, exponent weights, Shapiro inequality}
	
	\maketitle
	
	\section{Introduction}
	Throughout this paper $n> 1$ will be a fixed number of real positive variables $x_{1},\ldots,x_{n}$. Every shift of indices by $p$ is understood to be cyclic, $x_{i+p}:=x_{i+p\ (\mathrm{mod }\ n)}$.
	
	This paper is a far-reaching offshoot of the International Tournament for Young Mathematicians 2019. The participants were asked to investigate, for a fixed natural $k$, a cyclic inequality
	\begin{align}\label{original}
		\sum\limits_{i=1}^{n}\left(\frac{x_i}{x_{i+1}}\right)^k\geq \sum\limits_{i=1}^{n}\frac{x_i}{x_{i+p-1}}    
	\end{align}
	amounting to the assertion that a cyclic shift by 1 is in a sense optimal among all cyclic shifts.
	
	The second author was able to give a broad generalization regarding any permutations of the indices, finding that for a fixed permutation $\sigma$, the values of $k$ for which the inequality
	\begin{align}\label{main}
		\sum\limits_{i=1}^{n}\left(\frac{x_i}{x_{i+1}}\right)^k\geq \sum\limits_{i=1}^{n}\frac{x_i}{x_{\sigma(i)}}
	\end{align}
	holds are strictly dependent on how much $\sigma$ displaces elements of $\{1,\ldots,n\}$. This is described in Theorem \ref{result} and Corollaries \ref{wieszift}, and \ref{mnieszift}. We point out that these corollaries on cyclic shifts are much more straightforward than the general case. With counterexamples provided after each theorem, the classification of ``exponent weights'' $k$ for which \eqref{main} holds is complete.
	
	We also give some remarks on the interpretation of the inequality, as well as on the class of permutations in question.
	
	The same dependency between exponent weights and permutation $\sigma$ does not occur for Shapiro-type inequalities
	\begin{align}\label{szapiro}
		\sum\limits_{i=1}^{n}\left(\frac{x_i}{x_{i+1}+x_{i+2}}\right)^k\geq \sum\limits_{i=1}^{n}\frac{x_i}{x_{\sigma(i)}+x_{\sigma^2(i)}},
	\end{align}
	which we will discuss in Section \ref{szap}. While it is well known that the behaviour of the original Shapiro inequality (inequality \eqref{szapiro} with $k=1$ and $\sigma=id$) depends on the number of variables (i.e. it holds only for even $n\leq 12$ and odd $n\leq 23$, cf. \cite{fink}) and is thus substantially different from the inequality \eqref{main}, it is still surprising that the inequality \eqref{szapiro} does not seem to depend on any relationship between $\sigma$ and $k$ (cf. Examples \ref{jeden}, \ref{nes}, and \ref{trzy} in the last section). We will distinguish between Shapiro-type inequalities with exponent weights (with arbitrary $k$ and $\sigma$), and Shapiro inequalities with exponent weights (with arbitrary $k$ but with such a $\sigma$ that the RHS is constant). We prove that \eqref{szapiro} never holds for the former, and that it does hold for the latter, showing counterexamples for generalizations of the so-called Nesbitt's inequalities in the process. In the present paper these inequalities serve ultimately only to exhibit lack of dependence between the permutation in question and the exponent.
	
	As mentioned before, every $x_i$ will be a positive real number. We will use small Greek letters for permutations of finite sets $\{1,\ldots,n\}$ or $\{1,\ldots,m\}$, in which indices $i$ and $j$ will lie. We reserve the letter $k$ for the exponent appearing in the inequalities.
	
	\section{Rearrangement inequality and inequality \eqref{main}.}\label{shift}
	
	\begin{theorem}[Rearrangement inequality, \cite{rearr} Theorem 368.]\label{rear}
		Given $m$ increasing sequences of $n$ non-negative numbers, $a_{(j,1)}\leq a_{(j,2)}\leq\ldots\leq a_{(j,n)}$, $ j\in\{1,\ldots,m\}$, we have $$\sum\limits_{i=1}^{n} \prod\limits_{j=1}^{m} a_{(j,i)} \geq \sum\limits_{i=1}^{n} \prod\limits_{j=1}^{m} a_{(j,\sigma_j(i))}$$ for any collection $\{\sigma_j\}_{1}^{m}$ of permutations of $\{1,\ldots,n\}$.  
	\end{theorem}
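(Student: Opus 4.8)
The plan is to argue by induction on the number $m$ of sequences, with the classical two-sequence rearrangement inequality (the case $m=2$) as the engine. For $m=1$ both sides equal $\sum_i a_{(1,i)}$, since a permutation merely reorders the sum, so there is nothing to prove. For $m=2$, after reindexing the right-hand side by $\sigma_1^{-1}$ one reduces to the assertion $\sum_i b_i c_i \ge \sum_i b_i c_{\rho(i)}$ for two increasing nonnegative sequences $b,c$ and an arbitrary permutation $\rho$; this follows from the usual adjacent-transposition exchange argument, since whenever $p<q$ and $\rho(p)>\rho(q)$ the quantity $(b_q-b_p)(c_{\rho(p)}-c_{\rho(q)})$ is nonnegative, so undoing that inversion does not decrease $\sum_i b_i c_{\rho(i)}$, and iterating reaches $\rho=\mathrm{id}$.

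For the inductive step I would split off the $m$-th sequence. Starting from $\sum_i \prod_{j=1}^m a_{(j,\sigma_j(i))}$, reindex so that the (increasing) product sequence $g_i := \prod_{j=1}^{m-1} a_{(j,\sigma_j(i))}$ is listed in nondecreasing order; applying the $m=2$ case to $g$ and to $a_{(m,\cdot)}$ shows that our sum is at most $\sum_i g_i\,a_{(m,i)}$, where now both $g$ and $h_i := \prod_{j=1}^{m-1} a_{(j,i)}$ are increasing. It then remains to compare $\sum_i g_i a_{(m,i)}$ with $\sum_i h_i a_{(m,i)} = \sum_i \prod_{j=1}^m a_{(j,i)}$. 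By Abel summation, for increasing nonnegative weights $a_{(m,\cdot)}$ one has $\sum_i (h_i-g_i)a_{(m,i)} \ge 0$ provided every tail satisfies $\sum_{i\ge t} g_i \le \sum_{i\ge t} h_i$; since $g$ and $h$ are increasing, these tails are exactly the sums of the $n-t+1$ largest entries of the multisets $\{\prod_{j<m} a_{(j,\sigma_j(i))}\}$ and $\{\prod_{j<m} a_{(j,i)}\}$.

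This is where the plain statement fails to feed the induction, and it is the main obstacle: the inductive hypothesis for $m-1$ sequences (equality of full sums at the identity) is not enough; one needs the stronger fact that, for every $s$, the sum of the $s$ largest entries of $\{\prod_{j<m} a_{(j,\sigma_j(i))}\}$ is at most that of the $s$ largest entries of $\{\prod_{j<m} a_{(j,i)}\}$. So I would run the induction on this strengthened ``top-$s$ partial sum'' statement. Its base case $m=1$ is trivial (the two multisets coincide), and the passage $m-1\to m$ is the argument above combined with elementary bookkeeping on order statistics of sub-multisets (the $\ell$-th smallest entry of any $s$-element sub-multiset of an $n$-element multiset is at most its $(n-s+\ell)$-th smallest entry) and the same Abel-summation criterion, now on truncated index ranges. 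The theorem is the $s=n$ instance.

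An alternative would be the extremal route: take a maximizer of $\vec\sigma \mapsto \sum_i \prod_j a_{(j,\sigma_j(i))}$ over the finite set of permutation tuples, note that each coordinate is a best response and hence, by the $m=2$ case, comonotone with the product of the others, and try to conclude that all the reordered columns are comparable, reducing to the sorted configuration. However, making this comparability genuine is delicate when the partial products have ties (coordinatewise-optimal configurations need not be globally optimal there), so I would keep the inductive argument as the main line and use the extremal picture only as intuition.
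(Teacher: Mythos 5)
The paper does not prove this statement; it is quoted verbatim from Hardy--Littlewood--P\'olya (Theorem 368) and used as a black box, so there is no internal proof to compare against. Judged on its own, your argument is essentially correct, and you have put your finger on exactly the right difficulty: the naive induction on $m$ fails because knowing only that the full sums satisfy the inequality for $m-1$ sequences is too weak, and the correct strengthening is the weak-majorization statement that every top-$s$ partial sum of the permuted products is dominated by $\sum_{i=n-s+1}^{n}\prod_{j}a_{(j,i)}$. Your inductive step does close: choosing the top-$s$ index set, applying the two-sequence case within it, invoking your order-statistic bound (which needs the non-negativity hypothesis, since you multiply termwise upper bounds), and then Abel summation on the truncated range with non-negative increments $a_{(m,t)}-a_{(m,t-1)}$ and non-negative tails supplied by the inductive hypothesis. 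The one place where you are compressing real content is precisely this general-$s$ step (``elementary bookkeeping''): all the named ingredients are right, but they do have to be assembled in that order, so I would write it out. For comparison, the shortest classical proof avoids induction on $m$ altogether: write each increasing non-negative sequence as a non-negative combination of suffix indicators, $a_{(j,i)}=\sum_{t}c_{j,t}[i\geq t]$ with $c_{j,t}\geq 0$, expand both sides multilinearly, and observe that for indicator sequences the left side counts $\left|\bigcap_j\{t_j,\ldots,n\}\right|=n-\max_j t_j+1$ while the right side counts $\left|\bigcap_j\sigma_j^{-1}(\{t_j,\ldots,n\})\right|\leq\min_j(n-t_j+1)$; this is in effect your Abel-summation step performed in all $m$ coordinates simultaneously, and it also yields your top-$s$ strengthening for free since the sum of the top $s$ entries of an indicator of a $K$-element set is $\min(s,K)$.
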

	Define $a_i:=\frac{x_i}{x_{i+1}}$ (observe that $\prod\limits_{i=1}^{n}a_i=1$), $b_i:=a_i^{-1}$. We will keep this notation throughout the paper.
	
	Our main result is the following theorem.
	\begin{theorem}\label{result}
		For a fixed $\sigma$, the inequality \eqref{main} holds if either of the following conditions is satisfied:
		\begin{enumerate}
			\item $k\geq 0$, and $\begin{cases}
			\ \ k\geq \sigma(i)-i &\text{ for every } i \leq \sigma (i) \\
			\ \ k\geq n+\sigma(i)-i &\text{ for every } i> \sigma(i)
			\end{cases}$
			\item $k\leq 0$, and $\begin{cases}
			-k\geq n+i-\sigma(i) &\text{ for every } i < \sigma (i) \\
			-k\geq i-\sigma(i) &\text{ for every } i\geq \sigma(i).
			\end{cases}$
		\end{enumerate}
		Moreover, the examples below show that either of the conditions is necessary.
	\end{theorem}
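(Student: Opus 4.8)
The plan is to establish part (1) directly by weighted AM--GM, using the normalization $\prod_{i=1}^{n}a_i=1$ noted after Theorem~\ref{rear}, and then to deduce part (2) from part (1) by the order-reversing substitution $x_i\mapsto x_{n+1-i}$. This substitution sends $k$ to $-k$, replaces $\sigma$ by $\rho\sigma\rho$ with $\rho(i)=n+1-i$, and (as a short computation with the displacements confirms) turns the inequalities of condition (2) into those of condition (1); since it is a bijection of positive tuples, the "for all $x_i$" statements correspond, so part (2) will need no separate work.

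For part (1), assume $k\ge 0$ together with the stated inequalities. First I would rewrite each right-hand summand by telescoping along the forward arc from $i$ to $\sigma(i)$: $\frac{x_i}{x_{\sigma(i)}}=\prod_{l\in S_i}a_l$, where $S_i=\{i,i+1,\dots,\sigma(i)-1\}$ (cyclic indices; $S_i=\emptyset$ when $\sigma(i)=i$) is an arc whose length $d_i$ equals $\sigma(i)-i$ if $\sigma(i)\ge i$ and $n+\sigma(i)-i$ otherwise. The hypotheses in (1) say exactly that $0\le d_i\le k$ for every $i$, and I may assume $k>0$ since $k=0$ forces $\sigma=\mathrm{id}$.

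Fixing $i$ and setting $t_i=(k-d_i)/n\ge 0$, I would use $\prod_l a_l=1$ to inflate the monomial to an $n$-fold product, $\prod_{l\in S_i}a_l=\prod_{l=1}^{n}a_l^{\,[l\in S_i]+t_i}=\prod_{l=1}^{n}(a_l^{k})^{w_l^{(i)}}$ with $w_l^{(i)}=\bigl([l\in S_i]+t_i\bigr)/k\ge 0$ and $\sum_l w_l^{(i)}=(d_i+nt_i)/k=1$; weighted AM--GM then gives $\frac{x_i}{x_{\sigma(i)}}\le\sum_{l=1}^{n}w_l^{(i)}a_l^{k}$. Summing over $i$, it remains to show the total coefficient of each $a_l^{k}$ equals $1$, i.e. $\sum_i w_l^{(i)}=1$. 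Now $k\sum_i w_l^{(i)}=\mu_l+\sum_i t_i$ with $\mu_l:=\#\{i:l\in S_i\}$ and $\sum_i t_i=k-\tfrac1n\sum_i d_i$, so everything reduces to the claim that $\mu_l$ is independent of $l$. This holds because the arcs form a balanced circulation on the $n$-cycle --- each vertex is a source once and a sink once: an arc uses edge $a_l$ but not $a_{l+1}$ precisely when $\sigma(i)=l+1\ne i$, and it uses $a_{l+1}$ but not $a_l$ precisely when $i=l+1\ne\sigma(i)$, and these happen for the same number of $i$ (one, or none, according as $l+1$ is a non-fixed point of $\sigma$), whence $\mu_l=\mu_{l+1}$. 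Therefore $\mu_l=\tfrac1n\sum_l\mu_l=\tfrac1n\sum_i d_i$ for all $l$, so $\sum_i w_l^{(i)}=1$ and $\sum_i\frac{x_i}{x_{\sigma(i)}}\le\sum_l a_l^{k}=\sum_i\bigl(\tfrac{x_i}{x_{i+1}}\bigr)^{k}$, which is \eqref{main}.

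The hard part is the global bookkeeping in the last step: each individual AM--GM estimate spreads the "excess exponent" $nt_i$ of the $i$-th summand over all $n$ edges, and only the constancy of the circulation $\mu_l$ --- a consequence of $\sigma$ being a bijection --- guarantees that, after summation, every $a_l^{k}$ is charged with weight exactly $1$ rather than merely on average. It is also here that the sharp, arc-length form of the hypotheses enters: if some $d_i$ exceeded $k$, the corresponding $t_i$ would be negative, the weights $w_l^{(i)}$ would not all be nonnegative, and the argument would break --- which is consistent with the necessity exhibited by the examples following the theorem.
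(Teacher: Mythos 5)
Your proof is correct, but it takes a genuinely different route from the paper's. The paper writes $k=\tfrac{u}{v}$, inflates each right-hand summand by a power of $\prod_j a_j=1$ exactly as you do, and then invokes the Rearrangement Inequality for $un$ copies of the sequence $(a_1^{1/vn},\ldots,a_n^{1/vn})$; to put the right-hand side into the required ``one factor from each copy'' form it needs two combinatorial lemmas (that each cycle of $\sigma$ yields a ``cyclically constructed'' sum in which every $a_j$ occurs equally often) plus an iterated application of Hall's marriage theorem to extract successive systems of distinct representatives, and finally a continuity argument to pass from rational to real $k$. You replace all of this with a single weighted AM--GM per summand, with weights $w_l^{(i)}=([l\in S_i]+t_i)/k$, and your key combinatorial fact --- that $\mu_l=\#\{i: l\in S_i\}$ is constant in $l$ --- is exactly the content of the paper's Lemma \ref{samenumber} combined with its cycle-decomposition lemma, but you prove it by the cleaner local argument that each vertex of the $n$-cycle is the head of one arc and the tail of one arc (bijectivity of $\sigma$), so $\mu_l=\mu_{l+1}$. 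Your approach buys three things: it works for all real $k\ge 0$ at once (no rational approximation, no continuity step), it avoids Hall's theorem entirely, and your reduction of the negative case to the positive case via $x_i\mapsto x_{n+1-i}$ (which conjugates $\sigma$ by $\rho(i)=n+1-i$ and negates $k$ --- I checked the displacement bookkeeping and it does map condition (2) onto condition (1)) is slicker than the paper's ``repeat everything with $b_i=a_i^{-1}$.'' What the paper's route buys in exchange is the placement of the result directly under the classical Rearrangement Inequality, which makes the two corollaries on cyclic shifts immediate without any of the extra machinery. One tiny point worth making explicit in your write-up: the constancy of $\mu_l$ forces $\tfrac1n\sum_i d_i$ to be an integer, which is consistent since $\sum_i d_i\equiv\sum_i(\sigma(i)-i)\equiv 0 \pmod n$; and your observation that $k=0$ forces $\sigma=\mathrm{id}$ under condition (1) is correct, since $n+\sigma(i)-i\ge 1$ always.
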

	This theorem has two immediate corollaries on cyclic shifts.
	\begin{corollary}\label{wieszift}
		If $k \in \mathbb{R}$, $k \geq 0$, the inequality \eqref{original} holds for $k \geq p-1$.
	\end{corollary}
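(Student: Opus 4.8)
The plan is to derive this directly from Theorem \ref{result} by specializing to the cyclic-shift permutation. Concretely, I would take $\sigma(i) = i + p - 1$ with the index read modulo $n$, so that the right-hand side of \eqref{main} becomes the right-hand side of \eqref{original}; since $k \geq 0$ we land in case (1) of Theorem \ref{result}, and the only thing left to do is to check that the two conditions displayed there both reduce to the single requirement $k \geq p - 1$.

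To carry this out I would split $\{1, \ldots, n\}$ according to whether the shift wraps around. For $i$ with $i + p - 1 \leq n$ we have $\sigma(i) = i + p - 1 \geq i$, so $i \leq \sigma(i)$ and the first branch of condition (1) says $k \geq \sigma(i) - i = p - 1$. For $i$ with $i + p - 1 > n$ we have $\sigma(i) = i + p - 1 - n < i$, so $i > \sigma(i)$ and the second branch says $k \geq n + \sigma(i) - i = n + (p - 1 - n) = p - 1$. Thus every instance of either condition is exactly $k \geq p - 1$, and Theorem \ref{result}(1) gives \eqref{original}.

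I do not expect any genuine obstacle: the corollary is flagged as immediate, and the only point requiring a moment's care is the modular bookkeeping above, namely confirming that the wrapped indices fall into the $i > \sigma(i)$ case and still yield the bound $p - 1$, so that no condition stronger than $k \geq p - 1$ is imposed. As a sanity check, for $p = 1$ the claim degenerates to $\sum_{i=1}^{n}(x_i/x_{i+1})^k \geq n$, which follows from the AM--GM inequality together with $\prod_{i=1}^{n} x_i/x_{i+1} = 1$, consistent with the bound $k \geq p - 1 = 0$.
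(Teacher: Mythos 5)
Your proposal is correct and matches the paper's intent: the corollary is derived by specializing Theorem \ref{result}(1) to the cyclic shift $\sigma(i)=i+p-1$, and your modular bookkeeping correctly shows both branches collapse to $k\geq p-1$. The paper's only additional remark is that, within the proof of Theorem \ref{result}, the cyclic-shift case already follows from the Rearrangement Inequality before the Hall's-theorem machinery is needed, but this is the same argument in substance.
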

	\begin{corollary}\label{mnieszift}
		If $k \in \mathbb{R}$, $k<0$, the inequality \eqref{original} holds for $-k \geq n- p+1$.
	\end{corollary}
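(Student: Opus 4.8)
The plan is to reduce Corollary~\ref{mnieszift} to Theorem~\ref{result}(2) and then to explain how that case of Theorem~\ref{result} is proved, since all the substance lies there. For the reduction, take $\sigma$ to be the cyclic shift $\sigma(i)=i+p-1$. If $i+p-1\le n$ then $i<\sigma(i)$ and $n+i-\sigma(i)=n-p+1$, while if $i+p-1>n$ then $\sigma(i)=i+p-1-n<i$ and $i-\sigma(i)=n-p+1$ as well; hence both clauses of condition~(2) collapse to the single requirement $-k\ge n-p+1$, and Theorem~\ref{result}(2) delivers \eqref{original} in precisely the stated range (Corollary~\ref{wieszift} being the $k\ge 0$ analogue).

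So it suffices to prove Theorem~\ref{result}. The case $k\le 0$ follows from the case $k\ge 0$ by the substitution $x_i\mapsto 1/x_i$, which replaces $a_i$ by $b_i$, the exponent $k$ by $-k\ge 0$, and $\sigma$ by $\sigma^{-1}$; a short check shows that condition~(2) for $(\sigma,k)$ is exactly condition~(1) for $(\sigma^{-1},-k)$. So assume $k\ge 0$; the case $k=0$ forces $\sigma=\mathrm{id}$ and is trivial, so let $k>0$. Write each summand on the right of \eqref{main} as $\frac{x_i}{x_{\sigma(i)}}=\prod_{t\in A_i}a_t$, where $A_i=\{i,i+1,\dots,\sigma(i)-1\}$ is a cyclic block of length $d_i:=(\sigma(i)-i)\bmod n$; condition~(1) says precisely that $0\le d_i\le k$ for every $i$. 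Weighted AM--GM applied to $\prod_{t\in A_i}(a_t^{k})^{1/k}$, with the value $1$ absorbing the leftover weight $1-d_i/k\ge 0$, gives
\[
\frac{x_i}{x_{\sigma(i)}}\ \le\ \frac1k\sum_{t\in A_i}a_t^{k}+\Bigl(1-\frac{d_i}{k}\Bigr).
\]

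Summing over $i$ and setting $c_j:=\#\{i:j\in A_i\}$ and $D:=\sum_i d_i=\sum_j c_j$, the right-hand side of \eqref{main} is at most $\frac1k\sum_j c_j a_j^{k}+n-\frac{D}{k}$, so it is enough to prove $\sum_j(k-c_j)a_j^{k}\ge kn-D$. Two facts finish this. First, a counting lemma: $j\in A_i$ iff $(j-i)\bmod n<d_i$, and, as $i$ runs over $\{1,\dots,n\}$, the residues $(j-i)\bmod n$ are a permutation of $\{0,\dots,n-1\}$; a success thus forces this residue to be $<\max_i d_i$, so there are at most $\max_i d_i\le k$ of them. Hence $c_j\le k$, the weights $w_j:=k-c_j$ are non-negative, and $\sum_j w_j=kn-D$. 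Second, weighted AM--GM with these weights gives $\sum_j w_j a_j^{k}\ge\bigl(\sum_j w_j\bigr)\bigl(\prod_j a_j^{kw_j}\bigr)^{1/\sum_\ell w_\ell}$, and here $\prod_j a_j^{kw_j}=\bigl(\prod_j a_j^{k-c_j}\bigr)^{k}=1$: indeed $\prod_j a_j=1$, and $\prod_j a_j^{c_j}=\prod_i\prod_{t\in A_i}a_t=\prod_i\frac{x_i}{x_{\sigma(i)}}=\frac{\prod_i x_i}{\prod_i x_{\sigma(i)}}=1$ since $\sigma$ is a bijection. Hence $\sum_j w_j a_j^{k}\ge\sum_j w_j=kn-D$ (the degenerate case $\sum_j w_j=0$ forces every $w_j=0$ and is trivial), as required.

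The crux, I expect, is to notice that the leftover inequality $\sum_j(k-c_j)a_j^{k}\ge kn-D$ is itself governed by the single constraint $\prod_j\frac{x_j}{x_{\sigma(j)}}=1$: once one sees that $\prod_j a_j^{c_j}$ equals $\prod_i\frac{x_i}{x_{\sigma(i)}}=1$, the second application of AM--GM closes everything. Its twin difficulty is the counting bound $c_j\le\max_i d_i$ rather than merely $c_j\le n-1$; this is exactly what makes the weights $k-c_j$ non-negative under hypothesis~(1), and it is the combinatorial content linking the admissible exponents to the displacement of $\sigma$. Necessity of the conditions is not treated here; it is supplied by the explicit counterexamples presented after the theorem.
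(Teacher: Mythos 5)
Your proof is correct, and it reaches Theorem \ref{result} (and hence the corollary) by a genuinely different route than the paper. The reduction of Corollary \ref{mnieszift} to condition (2) of Theorem \ref{result} for the shift $\sigma(i)=i+p-1$ is the same as the paper's, and your observation that condition (2) for $(\sigma,k)$ is condition (1) for $(\sigma^{-1},-k)$ under $x_i\mapsto 1/x_i$ is a cleaner disposal of the negative case than the paper's ``repeat the argument with $b_i$.'' Where you diverge substantially is in the proof of the positive case: the paper pads each summand $\prod_{t\in A_i}a_t$ by a power of $\prod_j a_j=1$ so that, for rational $k=u/v$, every summand becomes a product of exactly $un$ factors $a_j^{1/(vn)}$; it then needs Lemma \ref{samenumber} (via the cycle decomposition of $\sigma$) to see that each $a_j$ occurs equally often, Hall's marriage theorem to organize the RHS into the shape required by the Rearrangement inequality (Theorem \ref{rear}), and a continuity argument to pass from rational to real $k$. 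You replace all of this with two applications of weighted AM--GM together with the counting bound $c_j\le\max_i d_i\le k$ and the identity $\prod_j a_j^{c_j}=\prod_i x_i/x_{\sigma(i)}=1$; this works directly for all real $k>0$, needs no rationality or continuity step, no Hall's theorem, and only the weaker facts $\sum_j c_j=D$ and $c_j\le k$ rather than the exact equidistribution of the $a_j$'s. The trade-off is that the paper's rearrangement-plus-marriage argument makes the link to Theorem \ref{rear} (and the ``optimality of the shift'' reading of the result) explicit, whereas your argument is shorter and more elementary but hides that structure; both proofs ultimately rest on the same two constraints, $\prod_j a_j=1$ and the balance of the multiset of $a_j$'s appearing on the right-hand side. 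Your deferral of necessity to the paper's counterexamples is appropriate, since the corollary only asserts sufficiency.
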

	\begin{proof}[Proof of Theorem \ref{result}]
		Let $k=\frac{u}{v}$ be a rational non-negative number satisfying the assumptions of the theorem. Define $\Delta_n(i):=\begin{cases}0 & \text{ for } i \leq \sigma(i)\\
		n & \text{ for } i > \sigma(i)
		\end{cases}$. We now rewrite the inequality \eqref{main} in the following way:
		\begin{align*}
			&\sum\limits_{i=1}^{n}\left(\frac{x_i}{x_{i+1}}\right)^k \geq \sum\limits_{i=1}^{n} \frac{x_i}{x_{\sigma (i)}} \iff \\
			&\sum\limits_{i=1}^{n}a_i^k \geq \sum\limits_{i=1}^{n} \prod _{j=i}^{\sigma (i)-1 +\Delta_n(i)} a_j \iff \\
			&\sum\limits_{i=1}^{n}a_i^k \geq \sum\limits_{i=1}^{n} \left( \prod \limits_{j=i}^{\sigma (i)-1 +\Delta_n(i)} a_j \cdot \left( \prod\limits_{j=1}^n a_j\right)^{\frac{k+i-\sigma(i) -\Delta_n(i)}{n}}\right) \iff \\
			&\sum\limits_{i=1}^{n}(a_i^{\frac{1}{vn}})^{un} \geq \sum\limits_{i=1}^{n} \left( \prod\limits_{j=i}^{\sigma (i)-1 +\Delta_n(i)}(a_j^{\frac{1}{vn}})^{vn} \cdot \left( \prod\limits_{j=1}^n a_j^{\frac{1}{vn}}\right)^{v(\frac{u}{v}+i-\sigma(i) -\Delta_n(i))}\right).
		\end{align*}
		If $\sigma$ were a cyclic shift, we would immediately recognize the inequality in Theorem \ref{rear} for $un$ copies of the sequence $(a_1^\frac{1}{vn}, \ldots, a_n^\frac{1}{vn})$, and both Corollaries \ref{wieszift} and \ref{mnieszift} are proven. In the general case however, we need some more work before we can use Theorem \ref{rear}.
		
		Let $\alpha$ and $\beta$ be permutations of $\{1,\ldots, m\}$ for some natural $m$. If the sum of $m$ fractions $\sum\limits_{i=1}^{m}\frac{x_{\alpha(i)}}{x_{\beta(i)}}$, with $x_i$'s pairwise different, can be sorted so that the denominator of the $i$-th fraction is equal to the numerator of $(i+1)$-th fraction we say the sum is \emph{cyclically constructed}. This of course happens iff $\alpha(\gamma(i+1))=\beta(\gamma(i))$ for some permutation $\gamma$ (the sorting).
		\begin{lemma}\label{samenumber}
			The cyclically constructed (and accordingly sorted) sum of $m$ fractions $\sum\limits_{i=1}^{m}\frac{x_{\alpha(i)}}{x_{\beta(i)}}$ rewritten as the sum of $m$ products of consecutive $a_i=\frac{x_i}{x_{i+1}}$ in the following manner (note the abuse of notation in (re)definition of $\Delta$):
			\begin{align*}
				\sum\limits_{i=1}^m \frac{x_{\alpha(i)}}{x_{\beta(i)}}=\sum_{i=1}^m \prod\limits_{j=\alpha(i)}^{\beta(i)-1+\Delta_m(i)}a_j, && \text{ where } && \Delta_m(i)=\begin{cases}0 & \text{ for } \alpha(i) \leq \beta(i)\\
					m & \text{ for } \alpha(i) \geq \beta(i)
				\end{cases}
			\end{align*}
			contains the same number of each $a_1,\ldots,a_m$.
		\end{lemma}
		\begin{proof}
			Rewriting the sum on the RHS in its expanded form, it starts with $a_{\alpha(1)}$ and afterwards passes through the consecutive $a_j$'s up to $a_{\beta(1)-1+\Delta_m(1)}$. The next summand picks up at $a_{\alpha(2)}=a_{\beta(1)+\Delta_m(1)}$, by the cyclically constructed condition, and so forth. This condition also implies that the last factor of the last summand is $a_{\alpha(1)-1}$, thus ensuring that we passed every $a_j$ an equal number of times.
		\end{proof}
		\begin{lemma}
			The sum $\sum\limits_{i=1}^{n} \frac{x_i}{x_{\sigma (i)}}$ can be rewritten as a sum of cyclically constructed sums of fractions and some integer.
		\end{lemma}
		\begin{proof}
			We will describe an algorithm to produce such a representation, using the cycle decomposition of $\sigma$.
			\begin{enumerate}
				\item For any $i\in\{1,\ldots,n\}$ such that $\sigma (i)=i$, we have $\frac{x_i}{x_{\sigma (i)}}=1$, and thus the sum over all such $i$'s is an integer. Exclude such $i$'s. 
				\item The first $i$ not yet excluded lies in a cycle $\{i,\sigma(i),\ldots,\sigma^m(i)\}$, for some $m$. Then $\sum\limits_{j=1}^{m}\frac{x_{\sigma^j(i)}}{x_{\sigma^{j+1}(i)}}$ is cyclically constructed. Exclude this orbit of~$i$.
				\item Repeat step (2) until all $i$'s are excluded.
			\end{enumerate}
			This procedure terminates and produces a representation as claimed.
		\end{proof}
		We are now ready to get back to the inequality 
		\begin{align*}
			\sum\limits_{i=1}^{n}(a_i^{\frac{1}{vn}})^{un} \geq \sum\limits_{i=1}^{n} \left( \prod\limits_{j=i}^{\sigma (i)-1 +\Delta_n(i)}(a_j^{\frac{1}{vn}})^{vn} \cdot \left( \prod\limits_{j=1}^n a_j^{\frac{1}{vn}}\right)^{v(\frac{u}{v}+i-\sigma(i) -\Delta_n(i))}\right).
		\end{align*}
		We claim it holds by Theorem \ref{rear} applied to $un$ copies of the sequence $(a_1^\frac{1}{vn}, \ldots, a_n^\frac{1}{vn})$: note that its LHS is invariant when this sequence is sorted to be increasing, and we again recognize the LHS of the Rearrangement Inequality. We claim that the RHS is too in an appropriate form for Theorem \ref{rear} to be applicable. Note that the RHS fulfills conditions of Lemma \ref{samenumber}, so it contains the same number of each $a_i^\frac{1}{vn}$.
		
		It will be sufficient to find an appropriate section by the Hall's marriage theorem. Indeed, in a family of sets $\{S^1_i\}_{i=1}^{n}$, where $S^1_i$ consists of terms $a_j$ present in the product in the $i$-th summand on the RHS of the last inequality, the marriage condition is satisfied, because each $a_j$ occurs in the sum $un$ times and each summand is a product of exactly $un$ terms. Thus, a section $(a_1^\frac{1}{vn}, \ldots, a_n^\frac{1}{vn})$ exists. We can now form the family $\{S^2_i\}_{i=1}^n$ of sets of terms $a_j$ appearing in the product in the $i$-th summand divided by the element previously selected, which again satisfies the marriage condition. Iterating this process, we manage to present the RHS as claimed, thus proving the theorem under the first condition for rational $k$ - and of course the non-rational case follows by continuity. The proof for negative $k$ follows the same steps, but starting with $k=-\frac{u}{v}$ and working with sequence $b_i$ instead of $a_i$.
	\end{proof}
	\begin{remark}
		We note that for $k>1$, inequality \eqref{original} is not an inequality in $p$-norms (suppose $P_{\sigma}(x_1,\ldots,x_n):=\left(\frac{x_1}{x_{\sigma(1)}},\ldots,\frac{x_n}{x_{\sigma(n)}}\right)$, then the RHS of \eqref{original} is $\left\Vert P_{\sigma}(x_1,\ldots,x_n)\right\Vert_{1}$ while the LHS is $\left\Vert P_{i\mapsto i+1}(x_1,\ldots,x_n)\right\Vert_{k}^{k}$). We cannot take the $k$-th root of the LHS and preserve the inequality, since for $x_1=\ldots=x_n$ both sides are equal (and greater than 1). For the sake of completeness, note that for $0<k<1$ we do have an inequality between the $k$-F-norm on the left and the 1-norm on the right, although we can only have the identity permutation there.
	\end{remark}
	\begin{remark}
		The permutations of $\{1,\ldots,n\}$ restricted by $k$ appearing in Theorem \ref{result} can be viewed as a variation of the combinatorial menag\'e problem. Their number $P_{n,k}$ can be in principle computed by the rook polynomials or the methods from Chapter 4.7 of \cite{wieze}. Apart from the trivial cases ($P_{n,0}=1$, $P_{n,1}=2$), we note here that this book gives the explicit computation of $P_{n,2}$ in Example 4.7.9 (after a permutation of the chessboard rows) and observes that they follow the Lucas numbers, $P_{n,2}=2+L_n$ save for the two initial outliers.
	\end{remark}
	\section*{Counterexamples}
	Suppose that for $k\geqslant 0$, there exist $i_0 \in \{1, 2, .. , n\} $ for which $ \Delta_n(i_0)+\sigma (i_0)-i_0 >k$.
	
	Fix $R>0$, such that $\Delta_n(i_0)+\sigma (i_0)-i_0 =k+R$. If we find an $n$-tuple $(x_i)$ such that $a_i=a_j>n^{\frac{1}{R}}\geqslant 1$, for $i, j\in \{1, 2, ..., n\} \setminus\{i_0-1\}$ (note that we then have $ a_{i_0-1}=\frac{1}{a_{i_0}^{n-1}}<a_{i_0}$), the following inequalities hold: 
	\begin{align*}
		\sum_{i=1}^{n}a_i^k=(n-1)a_{i_0}^k&+a_{i_0-1}^k<na_{i_0}^k=na_{i_0}^{\sigma (i_0)-i_0-R +\Delta_n(i_0)}\\
		\frac{n}{a_{i_0}^R}\cdot a_{i_0}^{\sigma (i_0)-i_0 +\Delta_n(i_0)}&<a_{i_0}^{\sigma (i_0)-i_0 +\Delta_n(i_0)}\\
		&=\prod_{j=i_0}^{\sigma (i_0)-i+\Delta_n(i_0)}a_j<\sum_{i=1}^{n} \prod_{j=i}^{\sigma (i)-i+\Delta_n(i_0)}a_j.
	\end{align*}
	An example of such $n$-tuple is $x_{i_0-1}:=2$ (or any arbitrary positive constant), $x_{i_0-1-i}:=x_{i_0-1}\left((n+1)^{\frac{1}{R}}\right)^{i}$. Similarly, the $n$-tuple $x_{i_0+1}:=2$ (or again, any arbitrary positive constant), $x_{i_0+1+i}:=x_{i_0+1}\left((n+1)^{\frac{1}{R}}\right)^{i}$, gives a counterexample for any negative $k$ outside of the range of Theorem \ref{result}.
	
	\section{Shapiro-type inequalities with exponent weights and permutations}\label{szap}
	
	In this section we briefly touch upon Shapiro (or Nesbitt's) inequality. We will distinguish the ``Shapiro-type inequalities with exponent weights'' of the form
	\begin{align*}\tag{\ref{szapiro}}
		\sum\limits_{i=1}^{n}\left(\frac{x_i}{x_{i+1}+x_{i+2}}\right)^k\geq\sum\limits_{i=1}^{n}\frac{x_i}{x_{\sigma(i)}+x_{\sigma^2(i)}}
	\end{align*}
	and ``Shapiro inequalities with exponent weights'', when the permutation makes the RHS constant and equal to $\frac{n}{2}$. This is of course the case iff $\sigma$ is a product of disjoint transpositions.
	
	Theorem \ref{result} may be taken to suggest that sufficiently similar inequalities should exhibit a similar behaviour: the inequality \eqref{main} can be read to mean that the dampening by exponents on the LHS can accommodate a sufficiently ``small'' action of a permutation on the RHS. It is not unreasonable to expect that this effect would carry over in some form to Shapiro-type inequalities (e.g., because the rational functions involved are of the same degree). We will show when Shapiro-type inequalities hold, and observe that there is no interdependence between $k$ and $\sigma$. To that end, in this section we assume that $k$ is non-negative. We again tacitly assume everywhere that $x_1,\ldots, x_n$ are positive. 
	\begin{example}\label{jeden}
		For $n=2$, the RHS of \eqref{szapiro} is equal to 1 for any of the two permutations of two elements. On the LHS we see a function of a parameter in the interval $[0,1]$, $t^k+(1-t)^k$ which attains a minimum less than 1 for $k>1$ and is always greater than or equal to 1 for every $k\leq 1$.
	\end{example}
	\begin{example}\label{nes}
		For $n=3$, on the RHS of \eqref{szapiro} we have the following two cases:
		\begin{enumerate}
			\item if $\sigma$ is a 3-cycle, then
			\begin{itemize}
				\item for $k=1$ we have an equality;
				\item for $k>1$ $x_1=x_2=x_3=1$ is a counterexample;
				\item for any $k<1$, a sufficiently large $x_1>>1$, and $x_2=x_3=1$ we have the LHS of \eqref{szapiro} lesser than $\left(\frac{x_1}{2}\right)^k + 1 +1$, and the RHS greater than $\frac{x_1}{2}$ -- and since the latter tends to infinity faster than the former, we have a counterexample;
			\end{itemize}
			\item if $\sigma$ is either a 2-cycle or the identity, then the RHS of \eqref{szapiro} is equal to $\frac{3}{2}$, and
			\begin{itemize}
				\item for $k=1$ we have the Shapiro inequality, which does hold in dimension 3;
				\item for $k>1$, $x_1=x_2=x_3=1$ is a counterexample;
				\item for $k<1$ the inequality holds by Proposition \ref{jens} below.
			\end{itemize}
		\end{enumerate}
	\end{example}
	\begin{example}\label{trzy}
		Finally, the two options for $n\geq 4$ are:
		\begin{enumerate}
			\item if the RHS of \eqref{szapiro} is not uniformly $\frac{n}{2}$ (i.e. assuming that $\sigma$ is not a product of disjoint transpositions), then
			\begin{itemize}
				\item for $k=1$ and $\sigma(i)=i+1$, we have an equality;
				\item for $k=1$ and $\sigma$ containing any other cycle, we will have an $i$ such that $\sigma(i)$ and $\sigma^2(i)$ are not two consecutive numbers mod $n$ (and all three are pairwise different), thus getting a counterexample with $x_{\sigma(i)}=x_{\sigma^2(i)}=r<<1$ and with all the other $x_j=1$: all terms on the LHS are then bounded by $1$ while there is the unbounded term $\frac{1}{2r}$ on the RHS;
				\item for $k>1$ we have the counterexample $x_1=\ldots =x_n=1$;
				\item for $k<1$ taking such $i$ that $\sigma(i)\neq\sigma^2(i)$ and putting $x_i>>1$ with all other $x_j=1$ we again have a counterexample as in bullet 3 of point 1 of Example \ref{nes}.
			\end{itemize}
			\item if the RHS is equal to $\frac{n}{2}$, then
			\begin{itemize}
				\item for $k>1$ we have the counterexample $x_1=\ldots =x_n=1$;
				\item for $k\leq 1$ is subject to Proposition \ref{jens} below.
			\end{itemize}
		\end{enumerate}
	\end{example}
	Having treated all Shapiro-type inequalities we move to Shapiro inequalities with exponents, when the RHS is uniformly equal to $\frac{n}{2}$. 
	\begin{proposition}\label{jens}
		The Shapiro inequality with exponents in dimension $n$
		\begin{align}\label{szsz}
			\sum\limits_{i=1}^{n}\left(\frac{x_i}{x_{i+1}+x_{i+2}}\right)^k\geq\frac{n}{2},
		\end{align}
		where $x_1,\ldots, x_n$ are positive real numbers, holds for every $k\leq 1$ iff the original Shapiro inequality does hold in this dimension.
	\end{proposition}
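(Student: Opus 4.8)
The plan is to split the claimed equivalence into its two directions. One direction is immediate: if \eqref{szsz} holds for every $k\le 1$, then taking $k=1$ is exactly the original Shapiro inequality in dimension $n$. For the converse I would assume Shapiro holds in dimension $n$ (i.e.\ \eqref{szsz} with $k=1$), fix positive $x_1,\dots,x_n$, and treat three ranges of $k$ separately.

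The essential case is $0<k<1$. Here I would use subadditivity of $t\mapsto t^{k}$ on $(0,\infty)$: since $k\le 1$ one has $(a+b)^{k}\le a^{k}+b^{k}$ for all $a,b>0$ (by the degree-$k$ homogeneity it suffices to check $a+b=1$, where $a^{k}\ge a$ and $b^{k}\ge b$). Applying this to $a=x_{i+1}$, $b=x_{i+2}$ and writing $z_i:=x_i^{k}>0$ gives
\[
\left(\frac{x_i}{x_{i+1}+x_{i+2}}\right)^{k}=\frac{x_i^{k}}{(x_{i+1}+x_{i+2})^{k}}\ge\frac{z_i}{z_{i+1}+z_{i+2}},
\]
so summing cyclically and applying the original Shapiro inequality to the tuple $(z_1,\dots,z_n)$ yields the claim. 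The range $k\le 0$ does not even use the Shapiro hypothesis: bounding each factor by $\left(\tfrac{x_i}{x_{i+1}+x_{i+2}}\right)^{k}=\left(\tfrac{x_{i+1}+x_{i+2}}{x_i}\right)^{-k}\ge\left(\tfrac{x_{i+1}}{x_i}\right)^{-k}$ (valid as $-k\ge 0$ and $x_{i+1}+x_{i+2}\ge x_{i+1}$) and then invoking AM--GM, $\sum_i\left(\tfrac{x_{i+1}}{x_i}\right)^{-k}\ge n\bigl(\prod_i\tfrac{x_{i+1}}{x_i}\bigr)^{-k/n}=n\ge\tfrac n2$, since the cyclic product telescopes to $1$. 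Finally $k=1$ is the hypothesis itself (and $k=0$ gives left side $=n$), so \eqref{szsz} holds for all $k\le1$.

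The only real obstacle is spotting the right reduction for $0<k<1$: once one observes that subadditivity of $t\mapsto t^{k}$ turns the denominators $(x_{i+1}+x_{i+2})^{k}$ into $x_{i+1}^{k}+x_{i+2}^{k}$, the whole problem collapses onto the $k=1$ Shapiro inequality applied to the rescaled variables $x_i^{k}$, and the degenerate tails $k\le 0$ fall to a one-line AM--GM argument. I would finish by checking the boundary cases for consistency with Examples \ref{jeden}--\ref{trzy} --- e.g.\ $n=2$, where both sides of \eqref{szsz} equal $\tfrac n2$ at $k=1$ and the left side is $t^{k}+(1-t)^{k}$, and $k=0$, where the left side is $n$.
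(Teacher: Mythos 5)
Your proof is correct and rests on the paper's own key step: the subadditivity of $t\mapsto t^k$ for $0<k\le 1$, which reduces \eqref{szsz} to the $k=1$ Shapiro inequality applied to the rescaled tuple $(x_1^k,\ldots,x_n^k)$ --- the paper phrases the very same computation as monotonicity of the infimum in the exponent, via concavity of $x\mapsto x^{k_1/k_2}$ for $k_1\le k_2\le 1$, of which your argument is the case $k_2=1$. The only cosmetic difference is that you dispatch $k\le 0$ with a separate AM--GM argument, whereas the paper's single chain of inequalities already covers that range; both are valid.
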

	We note the work of Daykin \cite{day} and Diananda \cite{dia} on related problems. We want to thank Maciej Raczuk and independently Piotr Kumor for pointing to us the reasoning below, greatly simplifying our original proof.
	\begin{proof}
		For $n=3$ this follows from Theorem \ref{rear} and convexity argument, but we tackle the general case. Since for $k_1\leq k_2 \leq 1$ we have
		\begin{align*}
			\left(\frac{x_i}{x_{i+1}+x_{i+2}}\right)^{k_1}=\left(\left(\frac{x_i}{x_{i+1}+x_{i+2}}\right)^{\frac{k_1}{k_2}}\right)^{k_2}\geq \left(\frac{x_i^{\frac{k_1}{k_2}}}{x_{i+1}^{\frac{k_1}{k_2}}+x_{i+2}^{\frac{k_1}{k_2}}}\right)^{k_2}
		\end{align*}
		because $x\mapsto x^{\frac{k_1}{k_2}}$ is concave, and therefore it follows that the the infimum of the LHS of \eqref{szsz} for $k_1$ is bounded from below by the infimum for $k_2$. Therefore they are all bounded by the infimum for $k=1$ and the proof is completed: in the dimensions where the original Shapiro inequality holds, this infimum is equal $\frac{n}{2}$; and in the dimensions in which it fails, any counterexample to the original Shapiro inequality is also a counterexample to the Shapiro inequality with every exponent $k$ sufficiently close to 1 (by continuity). Note that how close precisely $k$ must be to 1 to give a counterexample seems a hard problem. 
	\end{proof}
	\begin{remark}[Nesbitt's inequality with exponents]
		Shapiro inequality in dimension 3, as in Example \ref{nes} is often called Nesbitt's inequality. The common generalization is
		\begin{align}\label{nesb}
			\frac{x_1}{x_2+x_3+\ldots + x_n}+\ldots+\frac{x_n}{x_1+x_2+\ldots + x_{n-1}}\geq \frac{n}{n-1},
		\end{align}
		which is often proved by Theorem \ref{rear}: assuming $x_1\geq\ldots\geq x_n$ denote $s_i=\sum\limits_{j\neq i}x_j$ and note that $\frac{1}{s_1}\geq\ldots\geq\frac{1}{s_1}$ and applying the Rearrangement inequality $n-1$ times we have
		\begin{align*}
			(n-1)\sum\limits_{i=1}^{n}\frac{x_i}{s_i}\geq \sum\limits_{i=1}^{n}\frac{s_i}{s_i}\geq n
		\end{align*}
		the last inequality written as such to immediately observe that this proof generalizes by the concavity of $x\mapsto x^k$ to
		\begin{align}\label{nesbb}
			\sum\limits_{i=1}^n\left(\frac{x_i}{s_i}\right)^k\geq \frac{n}{n-1},
		\end{align}
		for every $k\leq 1$.
		
		A version of that inequality but ``with exponent weights'' was proven in \cite{nesb} (but note also \cite{wang}), namely Theorem 3 there states that for positive real numbers $x_1,\ldots,x_n$ and $k\geq 1$ the following inequality holds
		\begin{align}\label{nesbb}
			\sum\limits_{i=1}^n\left(\frac{x_i}{s_i}\right)^k\geq \frac{n}{(n-1)^k}.
		\end{align}
		One might want to extend this inequality to $k\leq 1$ at least for $n=3$ using Proposition \ref{jens} (e.g. since the minima of LHS tend to be attained in the corner $x_1=\ldots=x_n=1$ where \eqref{nesbb} does hold). However, a counterexample exists and for $x=1$, $y=0.1$, $z=0.1$, $k=0.1$ we have the opposite inequality
		\begin{align*}
			\left(\frac{x}{y+z}\right)^k+\left(\frac{y}{x+z}\right)^k+\left(\frac{z}{x+y}\right)^k< \frac{3}{2^k}.
		\end{align*}
	\end{remark}
	
	
	\normalsize

\end{document}